\newtheorem{theorem}{Theorem}[section]
\newtheorem{proposition}{Proposition}[section]
\newtheorem{lemma}{Lemma}[section]
\numberwithin{equation}{section}
\title[An inverse spectral problem]{An inverse spectral problem for a fractional Schr\"odinger operator}
\author[Mourad Choulli]{Mourad Choulli}
\address{Universit\'e de Lorraine}
\email{mourad.choulli@univ-lorraine.fr}
\thanks{The author is supported by the grant ANR-17-CE40-0029 of the French National Research Agency ANR (project MultiOnde). }
\date{}
\begin{document}

\begin{abstract}
We establish that the potential appearing in a fractional Schrödinger operator is uniquely determined by an  internal spectral data.
 \end{abstract}

\subjclass[2010]{35R30}

\keywords{Compact Riemannian manifold with boundary, internal spectral data, source-to-solution operator.}

\maketitle


\section{Introduction}

Let $\mathcal{M}=(\mathcal{M},\mathfrak{g})$ be a smooth connected compact Riemannian manifold with boundary $\partial \mathcal{M}$ of dimension $n\ge 2$. Recall that the Laplace-Beltrami operator associated to the metric tensor $\mathfrak{g}=(\mathfrak{g}_{k\ell})$ is given in local coordinates by
\[
\Delta_\mathfrak{g}=\frac{1}{\sqrt{|\mathfrak{g}|}}\sum_{k,\ell=1}^n\frac{\partial}{\partial x_k}\left(\sqrt{|\mathfrak{g}|}\mathfrak{g}^{k\ell}\frac{\partial}{\partial x_\ell}\, \cdot \right),
\]
where $(\mathfrak{g}^{k\ell})$ is the matrix inverse of $(\mathfrak{g}_{k\ell})$ and $|\mathfrak{g}|$ is the determinant of $\mathfrak{g}$.

The following notation will be useful in the sequel
\[
H_D^{\theta}(\mathcal{M})=\{u\in H^\theta(\mathcal{M});\;  u_{|\partial \mathcal{M}}=0\},\quad \theta >1/2.
\]

We denote by $A$ the realization on $L^2(\mathcal{M})$ of the negative Laplace-Beltrami operator with Dirichlet boundary conditions, that is, $D(A)=H_D^2(\mathcal{M})$ and $A=-\Delta _{\mathfrak{g}}$.

As $A$ is positive definite self-adjoint operator with compact resolvent its spectrum is reduced to a sequence of eigenvalues counted according to their multiplicity:
\[
0 < \lambda_1<\lambda_2\le  \ldots \lambda_k\le \ldots \quad \mbox{and}\quad \lambda_k\rightarrow \infty \; \mbox{as}\; k\rightarrow \infty.
\]

Moreover there exists $(\phi_k)_{k\ge 1}$ an orthonormal basis of $L^2(\mathcal{M})$ consisting of eigenfunctions or more precisely, each $\phi_k$ is an eigenfunction for the eigenvalue $\lambda_k$.

Let $0< s\le  1$. We recall that the fractional power $A^s$ is defined as follows
\[
A^su=\sum_{k\ge 1}\lambda_k^s(u|\phi_k)\phi_k,\quad u\in D(A^s).
\]
Here and henceforth the symbol $(\cdot|\cdot)$ denotes the natural scalar product of $L^2(\mathcal{M})$.

Following \cite[Theorem 1]{Fu} (see also \cite[Theorem 2.1]{Na}) we have
\[
D(A^s):=\mathcal{H}^s=\left\{
\begin{array}{ll}
\displaystyle H^{2s}(\mathcal{M}),\quad 0< s<1/4, \\ \\ \displaystyle \left\{u\in H^{1/2}(\mathcal{M});\int_{\mathcal{M}}\zeta^{-1}(x)|u(x)|^2dV<\infty\right\}, \quad s=1/4,\\ \\ \displaystyle
H_D^{2s}(\mathcal{M}),\quad 1/4<s\le 1,
\end{array}
\right.
\]
where $dV=\sqrt{|\mathfrak{g}|}dx^1\ldots dx^n$ denotes the Riemannian measure associated to $\mathfrak{g}$ and $\zeta$ is the distance to the boundary $\partial \mathcal{M}$.

It is worth noticing that, contrary to $A$, the operator $A^s$, $0<s<1$, is nonlocal. 

Set
\[
L_+^\infty(\mathcal{M})=\{ \mathfrak{q}\in L^\infty(\mathcal{M},\mathbb{R});\; \mathfrak{q}\ge 0\}
\]
For $\mathfrak{q}\in L_+^\infty(\mathcal{M})$ we denote by $A_{\mathfrak{q}}$ the realization on $L^2(\mathcal{M})$ of the operator $A^s+\mathfrak{q}$ with domain $D(A_{\mathfrak{q}})=\mathcal{H}^s$.

As $\mathcal{H}^s$ is compactly embedded in $L^2(\mathcal{M})$ and $A_{\mathfrak{q}}$ is self-adjoint we derive that the spectrum of $A_{\mathfrak{q}}$, denoted by $\sigma(A_{\mathfrak{q}})$, consists in a nondecreasing sequence $\left(\mu_k^{\mathfrak{q}}\right)$ converging to $\infty$. Furthermore to each $\mu_k^{\mathfrak{q}}$ we can associate an eigenfunction $\varphi_k^{\mathfrak{q}}$ in such a way that $\left(\varphi_k^{\mathfrak{q}}\right)$ form an orthonormal basis of $L^2(\mathcal{M})$.

Let $\Omega_0,\Omega_1$ and $\Omega'$ be three nonempty open subsets of $\mathrm{Int}(\mathcal{M})$ chosen in such a way that $\Omega_j\setminus \overline{\Omega'}\not=\emptyset$, $j=0,1$. Set $\Omega=\Omega_0\cup \Omega_1$ and consider the following subset of $L_+^\infty(\mathcal{M})\times L_+^\infty(\mathcal{M})$ 
\[
\mathcal{Q}=\left\{ (\mathfrak{q}_1,\mathfrak{q}_2)\in L_+^\infty(\mathcal{M})\times L_+^\infty(\mathcal{M});\; \mathrm{supp}(\mathfrak{q}_1-\mathfrak{q}_2)\subset \Omega'\right\}.
\]

\begin{theorem}\label{main-theorem}
Let $(\mathfrak{q}_1,\mathfrak{q}_2) \in \mathcal{Q}$. If
\[
\mu_k ^{\mathfrak{q}_1}=\mu_k^{\mathfrak{q}_2},\quad \varphi_k^{\mathfrak{q}_1}{_{|\Omega}}=\varphi_k^{\mathfrak{q}_2}{_{|\Omega}},\quad k\ge 1,
\]
then $\mathfrak{q}_1=\mathfrak{q}_2$.
\end{theorem}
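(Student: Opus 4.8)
The plan is to reduce the assertion to the equality of the two ``source-to-solution'' operators attached to $A_{\mathfrak{q}_1}$ and $A_{\mathfrak{q}_2}$, to extract from that equality an integral identity for $\mathfrak{q}_1-\mathfrak{q}_2$, and finally to conclude by a Runge-type approximation argument driven by the strong unique continuation property of the nonlocal operator $A^s$. Since $\mathfrak{b}_{\mathfrak{q}}$ is coercive we have $0\notin\sigma(A_{\mathfrak{q}})$, and for $f\in L^2(\Omega)$, extended by $0$ on $\mathcal{M}$,
\[
A_{\mathfrak{q}}^{-1}f=\sum_{k\ge 1}\frac{1}{\mu_k^{\mathfrak{q}}}\,(f|\varphi_k^{\mathfrak{q}})\,\varphi_k^{\mathfrak{q}},
\]
where every coefficient $(f|\varphi_k^{\mathfrak{q}})=\int_{\Omega}f\varphi_k^{\mathfrak{q}}\,dV$ depends only on $\varphi_k^{\mathfrak{q}}|_{\Omega}$. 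Hence the bounded operator $S_{\mathfrak{q}}\colon L^2(\Omega)\to L^2(\Omega)$, $S_{\mathfrak{q}}f=(A_{\mathfrak{q}}^{-1}f)|_{\Omega}$, is entirely determined by the data $\bigl(\mu_k^{\mathfrak{q}},\varphi_k^{\mathfrak{q}}|_{\Omega}\bigr)_{k\ge 1}$, and the hypothesis of the theorem yields $S_{\mathfrak{q}_1}=S_{\mathfrak{q}_2}$.

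Writing $u_j^f:=A_{\mathfrak{q}_j}^{-1}f$ for $f\in L^2(\Omega)$, I would test $\mathfrak{b}_{\mathfrak{q}_1}(u_1^f,v)=(f|v)$ with $v=u_2^h$, test $\mathfrak{b}_{\mathfrak{q}_2}(u_2^h,v)=(h|v)$ with $v=u_1^f$, and subtract, which gives
\[
\bigl((\mathfrak{q}_1-\mathfrak{q}_2)u_1^f\bigm| u_2^h\bigr)=(f|u_2^h)-(h|u_1^f),\qquad f,h\in L^2(\Omega).
\]
Since $f$ and $h$ are supported in $\Omega$ and $S_{\mathfrak{q}_1}=S_{\mathfrak{q}_2}$, on the right-hand side $(f|u_2^h)=(f|u_1^h)$, so the right-hand side equals $(f|u_1^h)-(h|u_1^f)$, which vanishes by symmetry of $\mathfrak{b}_{\mathfrak{q}_1}$. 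Because $\mathrm{supp}(\mathfrak{q}_1-\mathfrak{q}_2)\subset\Omega'$, this reads
\[
\int_{\Omega'}(\mathfrak{q}_1-\mathfrak{q}_2)\,u_1^f\,u_2^h\,dV=0,\qquad f,h\in L^2(\Omega).
\]

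The heart of the proof is the Runge approximation: for $j=1,2$ the set $\mathcal{R}_j:=\{u_j^f|_{\Omega'}\colon f\in L^2(\Omega)\}$ is dense in $L^2(\Omega')$. I would prove this by duality. If $g\in L^2(\Omega')$ is orthogonal to $\mathcal{R}_j$, set $v:=A_{\mathfrak{q}_j}^{-1}g$ (with $g$ extended by $0$). Then $\mathfrak{b}_{\mathfrak{q}_j}(v,u_j^f)=(g|u_j^f)=0$ for every $f\in L^2(\Omega)$, and symmetry of $\mathfrak{b}_{\mathfrak{q}_j}$ turns this into $(f|v)=0$ for all such $f$, i.e. $v|_{\Omega}=0$. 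The assumptions $\Omega_j\setminus\overline{\Omega'}\neq\emptyset$ furnish a nonempty open set $U\subset\Omega\setminus\overline{\Omega'}$; on $U$ we have $v=0$, and for every $\phi\in C_c^\infty(U)$, $(A^{s/2}v|A^{s/2}\phi)=(g|\phi)-(\mathfrak{q}_jv|\phi)=0$, so $A^sv=0$ on $U$ in the distributional sense. The strong unique continuation property for $A^s$ (classical when $s=1$, and for $0<s<1$ obtainable from the Caffarelli--Stinga type degenerate-elliptic extension on $\mathcal{M}\times(0,\infty)$ together with an associated Carleman estimate, using that $\mathcal{M}$ is connected) then forces $v\equiv 0$ on $\mathcal{M}$, whence $g=A^sv+\mathfrak{q}_jv=0$; thus $\mathcal{R}_j$ is dense.

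Finally, fixing $f$ in the last displayed identity and letting $u_2^h|_{\Omega'}$ run through the dense set $\mathcal{R}_2$ shows $(\mathfrak{q}_1-\mathfrak{q}_2)u_1^f=0$ a.e. in $\Omega'$ for every $f\in L^2(\Omega)$; since $\mathcal{R}_1$ is dense in $L^2(\Omega')$, the measurable set $\{x\in\Omega'\colon\mathfrak{q}_1(x)\neq\mathfrak{q}_2(x)\}$ must be of zero Riemannian measure, i.e. $\mathfrak{q}_1=\mathfrak{q}_2$ on $\Omega'$, and hence on $\mathcal{M}$. The one genuinely delicate and nonlocal ingredient is the strong unique continuation property invoked in the Runge step: this is where the nonlocality of $A^s$ is essential, and on a manifold it cannot be replaced by any classical unique continuation statement but genuinely requires the extension-problem machinery. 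The remaining arguments are routine functional analysis.
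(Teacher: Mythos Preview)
Your proposal is correct and follows essentially the same route as the paper: equality of spectral data $\Rightarrow$ equality of source-to-solution operators, an Alessandrini-type integral identity $\int_{\Omega'}(\mathfrak{q}_1-\mathfrak{q}_2)u_1u_2\,dV=0$, and a Runge/density lemma proved by duality together with the unique continuation property of $A^s$ (which the paper simply quotes from \cite{Yu}, their Theorem~\ref{theorem3.1}). The only cosmetic differences are that the paper keeps the source set $\Omega_0$ and the observation set $\Omega_1$ separate rather than working with $\Omega$ throughout, and in the final step approximates the constant $1$ on $\Omega'$ before invoking density a second time, whereas you pass directly to the pointwise conclusion; neither changes the substance of the argument.
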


As $\mu_k ^{\mathfrak{q}_j+\mu}=\mu_k ^{\mathfrak{q}_j}+\mu$, for any $k\ge1$, $\mu \in \mathbb{R}$ and $j=1,2$, it is clear that we can remove the condition $\mathfrak{q}_j\ge 0$ in Theorem \ref{main-theorem}.

The case $s=1$ was considered in \cite{HLOS} when $(\mathcal{M},\mathfrak{g})$ is a smooth connected compact Riemannian manifold without boundary. The authors established in \cite[Corollary 2]{HLOS} that the internal spectral data $\left(\lambda_k^{\mathfrak{g}}, \phi_k^{\mathfrak{g}}{_{|\Omega}}\right)$, where $\left(\lambda_k^{\mathfrak{g}}, \phi_k^{\mathfrak{g}}\right)$ is the sequence of eigenvalues and the corresponding eigenfunctions of $-\Delta_{\mathfrak{g}}$ and $\Omega$ is an open subset of $\mathrm{Int}(\mathcal{M})$, determines uniquely $\mathfrak{g}$ up to isometry.

There is a wide literature devoted to inverse spectral problems for Schr\"odinger operators and magnetic Schr\"odinger operators involving boundary spectral data. We refer to the recent work \cite{BCDKS} and references therein.

\section{Proof of Theorem \ref{main-theorem}}

We give the proof in several steps. The key step is based on the density result in Lemma \ref{lemma3.1} which is inspired by the proof of \cite[Theorem 1.1]{GSU}.

The resolvent of $A_{\mathfrak{q}}$, $\mathfrak{q}\in L_+^\infty(\mathcal{M})$, is denoted by $R_{\mathfrak{q}}$:
\[
R_{\mathfrak{q}}:\mu \in \rho(A_{\mathfrak{q}})\mapsto R_{\mathfrak{q}}(\mu)=\left(A_{\mathfrak{q}}-\mu\right)^{-1}\in \mathscr{B}\left(L^2(\mathcal{M}\right), \mathcal{H}^s),
\]
where $\rho(A_{\mathfrak{q}})=\mathbb{C}\setminus \sigma(A_{\mathfrak{q}})$ is the resolvent set of $A_{\mathfrak{q}}$.

We can then assert that, for each $f\in L^2(\mathcal{M})$ and $\mu \in \rho(A_{\mathfrak{q}})$, $u=R_{\mathfrak{q}}(\mu)f\in \mathcal{H}^s$ is the unique variational solution of the equation
\begin{equation}\label{bvp1}
\left(A^s+\mathfrak{q}-\mu\right)u=f.
\end{equation}
That is, we have
\begin{equation}\label{i1}
\left(A^{s/2}u|A^{s/2}v\right)+((\mathfrak{q}-\mu)u|v)=(f|v),\quad v\in  \mathcal{H}^{s/2}.
\end{equation}

As $\varphi_k^{\mathfrak{q}}=R_{\mathfrak{q}}(0)\left(-\mathfrak{q}\varphi_k^{\mathfrak{q}}+\mu_k^{\mathfrak{q}}\varphi_k^{\mathfrak{q}}\right)\in \mathcal{H}^s$, $k\ge 1$, we have
\begin{align*}
\left(u|\mu_k^{\mathfrak{q}}\varphi_k^{\mathfrak{q}}\right)&=\left(u|A^s\varphi_k^{\mathfrak{q}}\right)+\left(u|\mathfrak{q}\varphi_k^{\mathfrak{q}}\right)
\\
&=\left(A^{s/2}u|A^{s/2}\varphi_k^{\mathfrak{q}}\right)+\left(u|\mathfrak{q}\varphi_k^{\mathfrak{q}}\right).
\end{align*}
This and \eqref{i1} imply
\[
\left(u|\mu_k^{\mathfrak{q}}\varphi_k^{\mathfrak{q}}\right)=\left(f|\varphi_k^{\mathfrak{q}})-((\mathfrak{q}-\mu)u|\varphi_k^{\mathfrak{q}}\right)+\left(u|\mathfrak{q}\varphi_k^{\mathfrak{q}}\right)
\]
from which we derive in a straightforward manner
\begin{equation}\label{i2}
\left(u|\varphi_k^{\mathfrak{q}}\right)=\left(\mu_k^{\mathfrak{q}}-\mu\right)^{-1}\left(f|\varphi_k^{\mathfrak{q}}\right).
\end{equation}
Hence
\begin{equation}\label{i3}
R_{\mathfrak{q}}(\mu)f=\sum_{k\ge 1}\left(\mu_k^{\mathfrak{q}}-\mu\right)^{-1}\left(f|\varphi_k^{\mathfrak{q}}\right)\varphi_k^{\mathfrak{q}}.
\end{equation}

Note that since $\min_{k\ge 1}\left|\mu_k^{\mathfrak{q}}-\mu\right|=\mathrm{dist}\left(\mu,\sigma\left(A_{\mathfrak{q}}\right)\right)$ it follows from \eqref{i3} the following well-known resolvent estimate
\begin{equation}\label{r1}
\left\|R_{\mathfrak{q}}(\mu)\right\|_{\mathscr{B}(L^2(\mathcal{M}))}\le \frac{1}{\mathrm{dist}\left(\mu,\sigma\left(A_{\mathfrak{q}}\right)\right)}.
\end{equation}

We now make an observation that will be used in the sequel.

According to Weyl's asymptotic formula we have $\lambda_k=O\left(k^{2/n}\right)$ (e.g. \cite{Be}). On the other hand we infer from the min-max principle that $\lambda_k^s\le \mu_k^{\mathfrak{q}}\le \lambda_k^s+m$, for every $q\in L^\infty(\mathcal{M})$ satisfying $0\le q\le m$. We derive that $\mu_k^{\mathfrak{q}}=O\left(k^{2s/n}\right)$, uniformly with respect to $q\in L^\infty(\mathcal{M})$ satisfying $0\le q\le m$.

For notational convenience $R_{\mathfrak{q}}(0)$ is simply denoted in the rest of this section by $R_{\mathfrak{q}}$. Also, we consider $L^2(\Omega_0)$ as a subset of $L^2(\mathcal{M})$:
\[
L^2(\Omega_0)=\left\{f\in L^2(\mathcal{M});\; \mathrm{supp}(f)\subset \Omega_0\right\}
\]
and for $\mathfrak{q}\in L_+^\infty(\mathcal{M})$ define the source-to-solution operator $\Sigma_{\mathfrak{q}}$ as follows
\[
\Sigma_{\mathfrak{q}}: L^2(\Omega_0)\rightarrow L^2(\Omega_1):f\mapsto R_{\mathfrak{q}}f{_{|\Omega_1}}.
\]

Pick $\mathfrak{q}_j\in L^\infty(\mathcal{M})$ satisfying $0\le \mathfrak{q}_j\le m$, $j=1,2$. Since
\[
\Sigma_{\mathfrak{q}_j}f=\sum_{k\ge 1}\left(\mu_k^{\mathfrak{q}_j}\right)^{-1}(f|\varphi_k^{\mathfrak{q}_j})\varphi_k^{\mathfrak{q}_j}{_{|\Omega_1}},\quad j=1,2,
\]
we have
\[
\Sigma_{\mathfrak{q}_1}f-\Sigma_{\mathfrak{q}_2}f=I_1+I_2+I_3,\quad f\in L^2(\Omega_0),
\]
where
\begin{align*}
&I_1=\sum_{k\ge 1}\left[\left(\mu_k^{\mathfrak{q}_1}\right)^{-1}-\left(\mu_k^{\mathfrak{q}_2}\right)^{-1}\right]\left(f|\varphi_k^{\mathfrak{q}_1}\right)\varphi_k^{\mathfrak{q}_1}{_{|\Omega_1}},
\\
&I_2=\sum_{k\ge 1}\left(\mu_k^{\mathfrak{q}_2}\right)^{-1}\left(f|\varphi_k^{\mathfrak{q}_1}-\varphi_k^{\mathfrak{q}_2}\right)\varphi_k^{\mathfrak{q}_1}{_{|\Omega_1}},
\\
&I_3=\sum_{k\ge 1}\left(\mu_k^{\mathfrak{q}_2}\right)^{-1}\left(f|\varphi_k^{\mathfrak{q}_2}\right)\left(\varphi_k^{\mathfrak{q}_1}{_{|\Omega_1}}-\varphi_k^{\mathfrak{q}_2}{_{|\Omega_1}}\right).
\end{align*}

If $\|f\|_{L^2(\mathcal{M})}=1$ then
\begin{equation}\label{2.1}
\|I_1\|_{L^2(\Omega_1)}\le C\sum_{k\ge 1}k^{-4s/n}\left|\mu_k^{\mathfrak{q}_1}-\mu_k^{\mathfrak{q}_2}\right|
\end{equation}
and
\begin{equation}\label{2.2}
\|I_{2+j}\|_{L^2(\Omega_1)}\le C\sum_{k\ge 1}k^{-2s/n}\left\|\varphi_k^{\mathfrak{q}_1}-\varphi_k^{\mathfrak{q}_2}\right\|_{L^2(\Omega_j)},\quad j=0,1.
\end{equation}
Here and henceforth $C>0$ is a generic constant only depending on $\mathcal{M}$, $\mathfrak{g}$ and $m$.

Define
\[
\mathbf{d}(\mathfrak{q}_1,\mathfrak{q}_2)=\sum_{k\ge 1}\left[k^{-4s/n}\left|\mu_k^{\mathfrak{q}_1}-\mu_k^{\mathfrak{q}_2}\right|+k^{-2s/n}\left\|\varphi_k^{\mathfrak{q}_1}-\varphi_k^{\mathfrak{q}_2}\right\|_{L^2(\Omega)}\right]
\]
and suppose that $\mathbf{d}(\mathfrak{q}_1,\mathfrak{q}_2)<\infty$. Then inequalities \eqref{2.1} and \eqref{2.2} yield
\begin{equation}\label{2.3}
\left\|\Sigma_{\mathfrak{q}_1}-\Sigma_{\mathfrak{q}_2}\right\|_{\mathscr{B}\left(L^2(\Omega_0),L^2(\Omega_1)\right)}\le C\mathbf{d}(\mathfrak{q}_1,\mathfrak{q}_2).
\end{equation}

An immediate consequence of \eqref{2.3} is

\begin{proposition}\label{proposition2}
Let $\mathfrak{q}_1,\mathfrak{q}_2\in L_+^\infty (\mathcal{M})$. If
\[
\mu_k^{\mathfrak{q}_2}=\mu_k^{\mathfrak{q}_1},\quad \varphi_k^{\mathfrak{q}_2}{_{|\Omega}}=\varphi_k^{\mathfrak{q}_1}{_{|\Omega}} \quad k\ge 1,
\]
then $\Sigma_{\mathfrak{q}_1}=\Sigma_{\mathfrak{q}_2}$.
\end{proposition}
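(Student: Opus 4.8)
The plan is to deduce the statement directly from inequality \eqref{2.3}, which has just been established. First I would observe that, under the hypotheses $\mu_k^{\mathfrak{q}_1}=\mu_k^{\mathfrak{q}_2}$ and $\varphi_k^{\mathfrak{q}_1}{_{|\Omega}}=\varphi_k^{\mathfrak{q}_2}{_{|\Omega}}$ for every $k\ge 1$, every summand in the definition of $\mathbf{d}(\mathfrak{q}_1,\mathfrak{q}_2)$ vanishes: the term $k^{-4s/n}\left|\mu_k^{\mathfrak{q}_1}-\mu_k^{\mathfrak{q}_2}\right|$ is zero by the coincidence of the eigenvalues, and the term $k^{-2s/n}\left\|\varphi_k^{\mathfrak{q}_1}-\varphi_k^{\mathfrak{q}_2}\right\|_{L^2(\Omega)}$ is zero because the eigenfunctions agree on $\Omega$. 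Hence $\mathbf{d}(\mathfrak{q}_1,\mathfrak{q}_2)=0$; in particular $\mathbf{d}(\mathfrak{q}_1,\mathfrak{q}_2)<\infty$, so \eqref{2.3} is applicable and yields $\left\|\Sigma_{\mathfrak{q}_1}-\Sigma_{\mathfrak{q}_2}\right\|_{\mathscr{B}(L_{\Omega_0}^2(\mathcal{M}),L^2(\Omega_1))}\le C\,\mathbf{d}(\mathfrak{q}_1,\mathfrak{q}_2)=0$, i.e. $\Sigma_{\mathfrak{q}_1}=\Sigma_{\mathfrak{q}_2}$.

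As an equivalent route, one can argue term by term in the decomposition $\Sigma_{\mathfrak{q}_1}f-\Sigma_{\mathfrak{q}_2}f=I_1+I_2+I_3$: the sum $I_1$ vanishes since $\left(\mu_k^{\mathfrak{q}_1}\right)^{-1}-\left(\mu_k^{\mathfrak{q}_2}\right)^{-1}=0$ for each $k$; the sum $I_2$ vanishes because every $f\in L_{\Omega_0}^2(\mathcal{M})$ is supported in $\Omega_0\subset\Omega$ while $\varphi_k^{\mathfrak{q}_1}-\varphi_k^{\mathfrak{q}_2}$ vanishes on $\Omega$, so that $(f|\varphi_k^{\mathfrak{q}_1}-\varphi_k^{\mathfrak{q}_2})=0$; and the sum $I_3$ vanishes because $\Omega_1\subset\Omega$ forces $\varphi_k^{\mathfrak{q}_1}{_{|\Omega_1}}-\varphi_k^{\mathfrak{q}_2}{_{|\Omega_1}}=0$. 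Thus $\Sigma_{\mathfrak{q}_1}f=\Sigma_{\mathfrak{q}_2}f$ for every admissible $f$.

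I do not expect any genuine obstacle here: all the analytic content — the spectral expansion \eqref{i3} of the resolvent, the uniform growth bound $\mu_k^{\mathfrak{q}}=O\left(k^{2s/n}\right)$ obtained from Weyl's law together with the min-max principle, and the ensuing convergence estimates \eqref{2.1}, \eqref{2.2} and \eqref{2.3} — has already been secured. The only point deserving a word of care is to confirm that $\mathbf{d}(\mathfrak{q}_1,\mathfrak{q}_2)<\infty$ before invoking \eqref{2.3}, and this is immediate here since that quantity is in fact $0$.
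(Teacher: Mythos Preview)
Your argument is correct and matches the paper's: the proposition is stated there precisely as an immediate consequence of \eqref{2.3}, and your verification that $\mathbf{d}(\mathfrak{q}_1,\mathfrak{q}_2)=0$ under the hypotheses is exactly what is intended. The alternative term-by-term check via $I_1+I_2+I_3$ is also fine and amounts to the same computation.
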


Next, we state a result borrowed to \cite[Theorem 1.5]{Yu} which  still holds for $H^{2s}$-solutions. We point out that the reflexion extension argument used in the proof of  \cite[Theorem 1.5]{Yu} is contained in \cite[Theorem 3.1]{SZ}.

\begin{theorem}\label{theorem3.1}
Pick $V$ a nonempty open subset of $\mathrm{Int}(\mathcal{M})$. If $u\in \mathcal{H}^s$ satisfies $u=A^su=0$ in $V$ then $u=0$.
\end{theorem}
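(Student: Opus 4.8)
The plan is to convert the nonlocal Cauchy condition ``$u=A^su=0$ in $V$'' into a genuine Cauchy data problem for a \emph{local}, albeit degenerate, elliptic operator by means of a Caffarelli--Silvestre type extension attached to the Dirichlet Laplacian $A=-\Delta_{\mathfrak g}$, and then to invoke a strong unique continuation property for that extended operator; this is the scheme underlying \cite[Theorem 1.5]{Yu}.

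First I would build the extension. On the half-cylinder $\mathcal{C}=\mathrm{Int}(\mathcal{M})\times(0,\infty)$, with additional variable $y>0$ and the Muckenhoupt weight $\omega(y)=y^{1-2s}$, let $w$ be the spectrally defined solution of
\[
\mathrm{div}_{x,y}\!\left(\omega(y)\nabla_{x,y}w\right)=0\ \text{ in }\mathcal{C},\qquad w=0\ \text{ on }\partial\mathcal{M}\times(0,\infty),\qquad w(\cdot,0)=u,
\]
where $\mathrm{div}$ and $\nabla$ are taken with respect to $\mathfrak{g}\oplus dy^2$; explicitly $w(x,y)=\sum_{k\ge1}(u|\phi_k)\phi_k(x)\eta_s\!\left(\lambda_k^{1/2}y\right)$, with $\eta_s$ the bounded solution of the associated Bessel-type ODE normalised by $\eta_s(0)=1$. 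One checks in the usual way — and this reduces, near $V$, to the Euclidean computation in a coordinate chart — that $w\in H^1(\mathcal{C};\omega\,dy\,dV)$ and that the Caffarelli--Silvestre identification
\[
-c_s\lim_{y\to0^+}\omega(y)\partial_yw(\cdot,y)=A^su\quad\text{in }\mathcal{H}^{-s}
\]
holds for an explicit constant $c_s>0$.

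Since $u=0$ and $A^su=0$ on $V$, the function $w$ then has vanishing Dirichlet trace \emph{and} vanishing weighted conormal derivative on the flat portion $V\times\{0\}$. By the strong unique continuation property for the Caffarelli--Silvestre operator — whose proof proceeds by an even reflection of $w$ across $\{y=0\}$ (legitimate precisely because the weighted normal derivative vanishes; this is the reflection step \cite[Theorem 3.1]{SZ}), followed by a Carleman estimate adapted to the $A_2$ weight $|y|^{1-2s}$ in the spirit of R\"uland's work, as in \cite[Theorem 1.5]{Yu} — the vanishing of both Cauchy data of $w$ on the open set $V\times\{0\}$ forces $w\equiv0$ in a neighbourhood of $V\times\{0\}$ inside $\mathcal{C}$, hence $w$ vanishes on some interior slice $\{y=y_0\}$, $y_0>0$. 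Finally, in $\mathcal{C}$ the equation is uniformly elliptic with smooth coefficients, so classical unique continuation together with the connectedness of $\mathrm{Int}(\mathcal{M})\times(0,\infty)$ yields $w\equiv0$ on $\mathcal{C}$; taking the trace at $y=0$ gives $u\equiv0$ on $\mathrm{Int}(\mathcal{M})$, hence $u=0$.

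The main obstacle is the unique continuation step across the degenerate hyperplane $\{y=0\}$: it is \emph{not} covered by classical elliptic unique continuation because the weight $|y|^{1-2s}$ degenerates (or blows up) there, and it must be obtained through a Carleman/doubling estimate tailored to this weight, or through the reflection reduction of \cite{SZ}. A secondary, essentially routine, point is to verify that these originally Euclidean statements localise correctly to a small chart around $V\Subset\mathrm{Int}(\mathcal{M})$ and that the lower-order terms generated by the metric $\mathfrak{g}$ do not affect them, and to check that the whole argument applies verbatim to $H^{2s}$-solutions, which is the regularity available here since $u\in\mathcal{H}^s$.
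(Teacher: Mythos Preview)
The paper does not prove this theorem: it simply quotes it as ``a result borrowed to \cite[Theorem 1.5]{Yu}'', adding only the remark that the reflection extension step is contained in \cite[Theorem 3.1]{SZ} and that the conclusion persists for $H^{2s}$-solutions. Your proposal is precisely a sketch of that cited strategy --- Caffarelli--Silvestre extension for the spectral fractional Laplacian, even reflection across $\{y=0\}$ enabled by the vanishing weighted conormal derivative, and unique continuation for the degenerate $A_2$-weighted operator --- so it is fully aligned with the approach the paper defers to; there is no independent proof in the paper to compare against.
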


Theorem  \ref{theorem3.1} will serve to establish the following density result.

\begin{lemma}\label{lemma3.1}
Let $\omega_0$, $\omega_1$ be two nonempty open subsets of $\mathrm{Int}(\mathcal{M})$ so that $V=\omega_0\setminus \overline{\omega}_1\not=\emptyset$. For every $q\in L_+^\infty (\mathcal{M})$ the set
\[
\mathcal{S}_{\mathfrak{q}}=\left\{u=R_{\mathfrak{q}}f_{|\omega_1};\; f\in C_0^\infty (\omega_0)\right\}
\]
is dense in $L^2(\omega_1)$.
\end{lemma}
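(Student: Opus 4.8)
The plan is to prove this by a standard duality (Hahn-Banach) argument combined with the unique continuation result of Theorem \ref{theorem3.1}. Since $\mathcal{S}_q$ is a linear subspace of $L^2(\omega_1)$, to show it is dense it suffices to show that any $h\in L^2(\omega_1)$ that is orthogonal to every element of $\mathcal{S}_q$ must vanish. So I would fix such an $h$, extend it by zero to an element of $L^2(\mathcal{M})$ still denoted $h$ with $\mathrm{supp}(h)\subset\overline{\omega_1}$, and assume
\[
\left(R_{\mathfrak{q}}f\,\middle|\,h\right)=0\quad\text{for all }f\in C_0^\infty(\omega_0).
\]

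The next step is to move the resolvent onto $h$ using self-adjointness. Since $A_{\mathfrak{q}}$ is self-adjoint and $0\in\rho(A_{\mathfrak{q}})$, the resolvent $R_{\mathfrak{q}}=R_{\mathfrak{q}}(0)$ is self-adjoint on $L^2(\mathcal{M})$, so $\left(R_{\mathfrak{q}}f\,|\,h\right)=\left(f\,|\,R_{\mathfrak{q}}h\right)$. Setting $w=R_{\mathfrak{q}}h\in\mathcal{H}^s$, the orthogonality becomes $\left(f\,|\,w\right)=0$ for all $f\in C_0^\infty(\omega_0)$, hence $w=0$ in $\omega_0$. On the other hand, $w$ solves $\left(A^s+\mathfrak{q}\right)w=h$ in the variational sense, and since $\mathrm{supp}(h)\subset\overline{\omega_1}$ and $\mathrm{supp}(\mathfrak{q}w)$-related terms... more precisely, on the set $V=\omega_0\setminus\overline{\omega_1}$ we have both $h=0$ and $w=0$, so $A^sw=-\mathfrak{q}w=0$ there as well (since $w$ already vanishes on $\omega_0\supset V$, in particular $\mathfrak{q}w=0$ on $V$). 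Thus $w=A^sw=0$ in the nonempty open set $V$, and Theorem \ref{theorem3.1} forces $w\equiv 0$ on all of $\mathcal{M}$.

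Finally, from $w=R_{\mathfrak{q}}h=0$ and the injectivity of $R_{\mathfrak{q}}$ (it is a bijection onto $\mathcal{H}^s$) I conclude $h=0$, which completes the density argument. The one point that needs a little care — and which I expect to be the only genuine obstacle — is checking that $A^sw=0$ holds on $V$ in the sense required to apply Theorem \ref{theorem3.1}: one must interpret the equation $\left(A^s+\mathfrak{q}\right)w=h$ appropriately (e.g.\ testing \eqref{i1} with $\mu=0$, $f=h$, against $v\in C_0^\infty(V)$) to see that $A^sw$, as an element of $\mathcal{H}^{-s}$, restricts to $h-\mathfrak{q}w = 0$ on $V$, so that $w$ is a genuine $\mathcal{H}^s$-solution of $w=A^sw=0$ in $V$ in the hypothesis of Theorem \ref{theorem3.1}. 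Once this identification is made, the rest is immediate.
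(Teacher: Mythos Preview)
Your proposal is correct and follows essentially the same route as the paper's proof: the paper also takes $g\in L^2(\omega_1)$ orthogonal to $\mathcal{S}_q$, sets $v=R_{\mathfrak{q}}g$, uses the symmetry of $R_{\mathfrak{q}}$ (via the spectral expansion rather than abstract self-adjointness) to get $v=0$ in $\omega_0$, then reads off $A^sv=g-\mathfrak{q}v=0$ in $V$ and applies Theorem~\ref{theorem3.1}. Your added remark about the sense in which $A^sw=0$ on $V$ must be interpreted is a reasonable point of care that the paper leaves implicit.
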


\begin{proof}
Let $g\in L^2(\omega_1)$, extended by $0$ outside $\omega_1$, satisfy $(g|u)=0$ for any $u\in \mathcal{S}_q$. Equivalently we have 
\begin{equation}\label{3.1}
\left(g|R_{\mathfrak{q}}f\right)=0,\quad  f\in C_0^\infty (\omega_0).
\end{equation}
If $v=R_qg$ then
\[
\left(g|R_{\mathfrak{q}}f\right)=\sum_{k\ge 1}\left(\mu_k^{\mathfrak{q}}\right)^{-1}\left(\varphi_k^{\mathfrak{q}}|f\right)\left(g|\varphi_k^{\mathfrak{q}}\right)=\left(R_{\mathfrak{q}}g|f\right)=(v|f),\quad  f\in C_0^\infty (\omega_0).
\]
Hence \eqref{3.1} yields
\[
(v|f)=0,\quad  f\in C_0^\infty (\omega_0).
\]
Therefore $v=0$ in $\omega_0$. Using $A^sv=g-qv$ and $\mathrm{supp}(g)\subset \omega_1$ we obtain $v=A^sv=0$ in $V$. Theorem \ref{theorem3.1} implies $v=0$ and hence $g=0$.
\end{proof}

\begin{proof}[Completion of the proof of Theorem \ref{main-theorem}]
Let $(\mathfrak{q}_1,\mathfrak{q}_2)\in \mathcal{Q}$ satisfying
\[
\mu_k ^{\mathfrak{q}_1}=\mu_k^{\mathfrak{q}_2},\quad \varphi_k^{\mathfrak{q}_1}{_{|\Omega}}=\varphi_k^{\mathfrak{q}_2}{_{|\Omega}},\quad k\ge 1.
\]
For $u_j=R_{\mathfrak{q}_j}f$, $f\in L^2(\Omega_0)$, we have
\[
A^s u_1+\mathfrak{q}_1u_1=A^s u_2+\mathfrak{q}_2u_2.
\]
Hence 
\[
A^s(u_1-u_2)+\mathfrak{q}_1(u_1-u_2)=(\mathfrak{q}_2-\mathfrak{q}_1)u_2.
\]
If $v_1=R_{\mathfrak{q}_1}g$, $g\in C_0^\infty (\Omega_1)$, then 
\[
\left(\left(u_1-u_2\right)|A^sv_1\right)+\left(\left(u_1-u_2\right)|\mathfrak{q}_1v_1\right)=\left(\left(\mathfrak{q}_2-\mathfrak{q}_1\right)u_2|v_1\right).
\]
Using that $A^sv_1+\mathfrak{q}_1v_1=g$ we find
\[
\left(\left(u_1-u_2\right)|g\right)= \left(\left(\mathfrak{q}_2-\mathfrak{q}_1\right)u_2|v_1\right).
\]
It follows from Proposition \ref{proposition2} that $u_1-u_2=0$ in $\Omega_1$. This and the fact that $\mathrm{supp}(g)\subset \Omega_1$ imply
\[
\left(\left(\mathfrak{q}_2-\mathfrak{q}_1\right)u_2|v_1\right)=0.
\]
In other words we proved
\begin{equation}\label{3.2}
\left(\left(\mathfrak{q}_2-\mathfrak{q}_1\right)R_{\mathfrak{q}_2}f|R_{\mathfrak{q}_1}g\right)=0,\quad f\in C_0^\infty (\Omega_0),\; g\in C_0^\infty (\Omega_1).
\end{equation}
As $\mathrm{supp}\left(\mathfrak{q}_2-\mathfrak{q}_1\right)\subset \Omega'$ and $\left\{R_{\mathfrak{q}_1}g{_{|\Omega'}};\; g\in C_0^\infty (\Omega_1)\right\}$ is dense $L^2(\Omega')$ by Lemma \ref{lemma3.1}  
we find  by applying \eqref{3.2} to a sequence $(g_k)$ in $C_0^\infty (\Omega_1)$ so that $R_{\mathfrak{q}_1}g_k{_{|\Omega'}}$ converges to $1$ in $L^2(\Omega')$
\[
\left(\left(\mathfrak{q}_2-\mathfrak{q}_1\right)R_{\mathfrak{q}_2}f|1\right)=\left(R_{\mathfrak{q}_2}f|\mathfrak{q}_2-\mathfrak{q}_1\right)=0,\quad f\in C_0^\infty (\Omega_0).
\]

Applying once again Lemma \ref{lemma3.1} (with $\omega_0=\Omega_0$ and $\omega_1=\Omega'$) we finally obtain $\mathfrak{q}_2=\mathfrak{q}_1$. 
\end{proof}

The preceding proof contains the following result
\begin{theorem}\label{theoremF}
Let $(\mathfrak{q}_1,\mathfrak{q}_2) \in \mathcal{Q}$. If  $\Sigma_{\mathfrak{q}_1}=\Sigma_{\mathfrak{q}_2}$, then $\mathfrak{q}_1=\mathfrak{q}_2$.
\end{theorem}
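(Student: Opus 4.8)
The plan is to notice that the paragraph labelled \emph{Completion of the proof of Theorem \ref{main-theorem}} never really used the hypotheses $\mu_k^{\mathfrak{q}_1}=\mu_k^{\mathfrak{q}_2}$ and $\varphi_k^{\mathfrak{q}_1}|_\Omega=\varphi_k^{\mathfrak{q}_2}|_\Omega$ in themselves, but only the conclusion $\Sigma_{\mathfrak{q}_1}=\Sigma_{\mathfrak{q}_2}$ that Proposition \ref{proposition2} extracts from them. So one repeats that argument verbatim, taking $\Sigma_{\mathfrak{q}_1}=\Sigma_{\mathfrak{q}_2}$ as the standing assumption. Concretely, fix $f\in C_0^\infty(\Omega_0)$ and $g\in C_0^\infty(\Omega_1)$, and put $u_j=R_{\mathfrak{q}_j}f\in\mathcal{H}^s$, $v_1=R_{\mathfrak{q}_1}g\in\mathcal{H}^s$. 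Subtracting the equations $A^su_j+\mathfrak{q}_ju_j=f$ gives
\[
A^s(u_1-u_2)+\mathfrak{q}_1(u_1-u_2)=(\mathfrak{q}_2-\mathfrak{q}_1)u_2 .
\]

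Next I would pair this identity with $v_1$ in $L^2(\mathcal{M})$, move the operator $A^s$ onto $v_1$ using the symmetry of $\mathfrak{b}_{\mathfrak{q}_1}$, i.e. $(u_1-u_2\,|\,A^sv_1)=(A^{s/2}(u_1-u_2)\,|\,A^{s/2}v_1)=(A^s(u_1-u_2)\,|\,v_1)$, which is legitimate since $u_1-u_2,v_1\in\mathcal{H}^s\subset\mathcal{H}^{s/2}$, and then use $A^sv_1+\mathfrak{q}_1v_1=g$. This collapses the left-hand side to $(u_1-u_2\,|\,g)$ and yields
\[
(u_1-u_2\,|\,g)=\bigl((\mathfrak{q}_2-\mathfrak{q}_1)u_2\,|\,v_1\bigr).
\]
The hypothesis $\Sigma_{\mathfrak{q}_1}=\Sigma_{\mathfrak{q}_2}$ means precisely that $R_{\mathfrak{q}_1}f=R_{\mathfrak{q}_2}f$ on $\Omega_1$ for every $f\in L^2_{\Omega_0}(\mathcal{M})$, so $u_1-u_2=0$ on $\Omega_1\supset\mathrm{supp}(g)$, the left-hand side vanishes, and one obtains the orthogonality relation \eqref{3.2}, namely
\[
\bigl((\mathfrak{q}_2-\mathfrak{q}_1)R_{\mathfrak{q}_2}f\,|\,R_{\mathfrak{q}_1}g\bigr)=0,\qquad f\in C_0^\infty(\Omega_0),\ g\in C_0^\infty(\Omega_1).
\]

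To finish, I would use $\mathrm{supp}(\mathfrak{q}_2-\mathfrak{q}_1)\subset\Omega'$ together with two applications of Lemma \ref{lemma3.1}. First apply it with $\omega_0=\Omega_1$, $\omega_1=\Omega'$ (admissible since $\Omega_1\setminus\overline{\Omega'}\neq\emptyset$): the set $\{R_{\mathfrak{q}_1}g|_{\Omega'}:g\in C_0^\infty(\Omega_1)\}$ is dense in $L^2(\Omega')$, so choosing $g_k$ with $R_{\mathfrak{q}_1}g_k|_{\Omega'}\to 1$ and passing to the limit in \eqref{3.2} gives $(R_{\mathfrak{q}_2}f\,|\,\mathfrak{q}_2-\mathfrak{q}_1)=0$ for every $f\in C_0^\infty(\Omega_0)$. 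Then apply the lemma again with $\omega_0=\Omega_0$, $\omega_1=\Omega'$ (admissible since $\Omega_0\setminus\overline{\Omega'}\neq\emptyset$): $\{R_{\mathfrak{q}_2}f|_{\Omega'}:f\in C_0^\infty(\Omega_0)\}$ is dense in $L^2(\Omega')$, which forces $\mathfrak{q}_2-\mathfrak{q}_1=0$ in $L^2(\Omega')$, hence $\mathfrak{q}_1=\mathfrak{q}_2$.

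The substantive ingredients are thus Lemma \ref{lemma3.1} (resting on the unique continuation statement Theorem \ref{theorem3.1}) and the geometric configuration $\Omega_j\setminus\overline{\Omega'}\neq\emptyset$, which alone make the two density steps legitimate. The only point requiring a little care is the duality manipulation that turns the pairing of the subtracted equation with $v_1$ into $(u_1-u_2\,|\,g)$: one must keep track of which spaces are being paired and invoke the mapping property $R_{\mathfrak{q}_j}\colon L^2(\mathcal{M})\to\mathcal{H}^s$ and the symmetry of $\mathfrak{b}_{\mathfrak{q}_1}$ so that moving $A^s$ across the scalar product is justified; beyond that, the proof is a straightforward transcription of the argument already given for Theorem \ref{main-theorem}.
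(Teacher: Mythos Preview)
Your proposal is correct and matches the paper's approach exactly: the paper itself states that Theorem \ref{theoremF} is ``contained'' in the completion of the proof of Theorem \ref{main-theorem}, precisely because that argument only invokes the spectral assumptions through Proposition \ref{proposition2}, i.e.\ through $\Sigma_{\mathfrak{q}_1}=\Sigma_{\mathfrak{q}_2}$. Your transcription of the duality computation leading to \eqref{3.2} and the two applications of Lemma \ref{lemma3.1} (with $\omega_0=\Omega_1$, then $\omega_0=\Omega_0$, and $\omega_1=\Omega'$ in both cases) reproduce the paper's reasoning faithfully.
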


Let $(-\Delta)^s=\mathrm{Op}(|\xi|^{2s})$, $0<s<1$, be the fractional power of $-\Delta$ acting as an operator on $\mathbb{R}^n$. Pick $D$ a bounded domain of $\mathbb{R}^n$ and consider the following problem
\begin{equation}\label{p}
\left((-\Delta )^s+q\right)u=0\; \mathrm{in}\; D,\quad u_{|D_e}=f,
\end{equation}
where $D_e=\mathbb{R}^n\setminus\overline{D}$.

When $D$ and $\mathfrak{q}$ are sufficiently smooth the authors define in \cite{GSU}  an analogue of the so-called Dirichlet-to-Neumann map  as follows
\[
\Lambda_{\mathfrak{q}}:f\in H^{s+\beta}(D_e)\mapsto (-\Delta u(f))^s{_{|D_e}}\in H^{-s+\beta}(D_e),
\]
where $u(f)$ is the variational solution of \eqref{p} and $\max(0,s-1/2)<\beta<1/2$.

Let $W_1,W_2$ be two open subsets of $D_e$. It is proved in \cite[Theorem 1.1]{GSU} that $f\in C_0^\infty (W_1)\mapsto \Lambda_{\mathfrak{q}}f{_{|W_2}}$ determines uniquely $\mathfrak{q}$.

A generalization of this result was obtained in \cite{GLX} when $-\Delta$ is substituted by an elliptic operator with variable coefficients.

We refer to \cite[Theorem 1.2]{RS} to a stability inequality corresponding to the uniqueness result in \cite{GSU}.

\section{Comments}

Fix $\Omega$ a nonempty open subset of $\mathrm{Int}(\mathcal{M})$ and let $\Omega_0$ and $\Omega_1$ be two nonempty open subsets so that $\Omega_0\cup \Omega_1=\Omega$. Let $\mathfrak{q}_1,\mathfrak{q}_2 \in L_+^\infty (\mathcal{M})$. If the assumption
\begin{equation}\label{r0}
\mu_k ^{\mathfrak{q}_1}=\mu_k^{\mathfrak{q}_2},\quad \varphi_k^{\mathfrak{q}_1}{_{|\Omega}}=\varphi_k^{\mathfrak{q}_2}{_{|\Omega}},\quad k\ge 1,
\end{equation}
holds then we derive from formula \eqref{i3}
\begin{equation}\label{r1}
R_{\mathfrak{q}_1}(-\mu)f{_{|\Omega_1}}= R_{\mathfrak{q}_2}(-\mu)f{_{|\Omega_1}},\quad \Re \mu>0,\; f\in C_0^\infty(\Omega_0).
\end{equation}

This means that the internal spectral data $\left(\lambda_k^{\mathfrak{q}}, \varphi_k^{\mathfrak{q}}{_{|\Omega}}\right)$ does not only determine the source-to-solution operator $\Sigma_{\mathfrak{q}}$ but the whole family of source-to-solution operators defined as follows
\[
\Sigma_{\mathfrak{q}}(\mu):f\in C_0^\infty(\Omega_0)\mapsto R_{\mathfrak{q}}(-\mu)f{_{|\Omega_1}},\quad \Re \mu >0,
\]

It turns out that the knowledge of the family of source-to-solution operators $\left(\Sigma_{\mathfrak{q}}(\mu)\right)_{\Re \mu >0}$ determines the internal data $\left(\lambda_k^{\mathfrak{q}}, \varphi_k^{\mathfrak{q}}{_{|\Omega}}\right)$. Precisely, we have the following result.

\begin{proposition}\label{propositionco}
Suppose $\Omega_0\cap \Omega_1\not=\emptyset$. Let $\mathfrak{q}_1,\mathfrak{q}_2 \in L_+^\infty (\mathcal{M})$ satisfying
 \[
 \Sigma_{\mathfrak{q}_1}(\mu)=\Sigma_{\mathfrak{q}_2}(\mu),\quad \Re \mu >0.
 \]
 Then there exists $\left(\hat{\varphi}_k^{\mathfrak{q}_j}\right)$ an orthonormal basis of eigenfunctions of $A_{\mathfrak{q}_j}$, $j=1,2$, such that we have
\begin{equation}\label{co1}
\mu_k ^{\mathfrak{q}_1}=\mu_k^{\mathfrak{q}_2},\quad \hat{\varphi}_k^{\mathfrak{q}_1}{_{|\Omega}}=\hat{\varphi}_k^{\mathfrak{q}_2}{_{|\Omega}},\quad k\ge 1.
\end{equation}
\end{proposition}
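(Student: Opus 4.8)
The plan is to exploit the analyticity of the resolvent in the spectral parameter together with the spectral expansion \eqref{i3}. Fix $f,h\in C_0^\infty(\Omega_0)$ (using $\Omega_0\cap\Omega_1\neq\emptyset$ so that such test functions are also supported, after restriction, in a region where both $\Sigma_{\mathfrak q_j}(\mu)$ can be tested against functions supported in $\Omega_1$). From \eqref{i3} the scalar function
\[
F_j(\mu)=\left(R_{\mathfrak q_j}(-\mu)f\,\big|\,h\right)=\sum_{k\ge1}\left(\mu_k^{\mathfrak q_j}+\mu\right)^{-1}\left(f|\varphi_k^{\mathfrak q_j}\right)\overline{\left(h|\varphi_k^{\mathfrak q_j}\right)}
\]
is meromorphic in $\mu\in\mathbb{C}$ with poles only at $\mu=-\mu_k^{\mathfrak q_j}$. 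The hypothesis $\Sigma_{\mathfrak q_1}(\mu)=\Sigma_{\mathfrak q_2}(\mu)$ for $\Re\mu>0$ gives $F_1=F_2$ on the right half-plane, hence $F_1\equiv F_2$ on $\mathbb{C}$ by analytic continuation. First I would compare the pole sets: the union $\bigcup_k\{-\mu_k^{\mathfrak q_1}\}$ must coincide with $\bigcup_k\{-\mu_k^{\mathfrak q_2}\}$ (a pole that appears on one side with nonzero residue for some choice of $f,h$ must appear on the other), and matching multiplicities via residues of higher order forces $\mu_k^{\mathfrak q_1}=\mu_k^{\mathfrak q_2}$ for all $k$ after reindexing — here one must be a little careful with repeated eigenvalues, taking $f,h$ ranging over a spanning set to detect the full dimension of each eigenspace.

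Next I would extract the eigenfunction information from the residues. At a common eigenvalue $\mu=\mu_{(\ell)}$ (writing $E_j(\ell)$ for the corresponding eigenspace of $A_{\mathfrak q_j}$, of common dimension $d_\ell$), the residue of $F_j$ at $-\mu_{(\ell)}$ equals
\[
\sum_{k:\,\mu_k^{\mathfrak q_j}=\mu_{(\ell)}}\left(f|\varphi_k^{\mathfrak q_j}\right)\overline{\left(h|\varphi_k^{\mathfrak q_j}\right)}=\left(P_j^\ell f\,\big|\,h\right),
\]
where $P_j^\ell$ is the orthogonal projection onto $E_j(\ell)$. Since $f,h$ range over $C_0^\infty(\Omega_0)$ and restriction to $\Omega_0$ enters only through the values of the eigenfunctions on $\Omega_0\subset\Omega$, the equality $F_1\equiv F_2$ yields that the bilinear forms $(f,h)\mapsto(P_1^\ell f|h)$ and $(P_2^\ell f|h)$ agree for all $f,h$ supported in $\Omega_0$; equivalently the reproducing kernels of $E_1(\ell)$ and $E_2(\ell)$ coincide on $\Omega_0\times\Omega_0$. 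This says precisely that the finite-dimensional spaces of functions $\{\varphi_k^{\mathfrak q_1}{_{|\Omega_0}}\}$ and $\{\varphi_k^{\mathfrak q_2}{_{|\Omega_0}}\}$ spanning $E_1(\ell)$, $E_2(\ell)$ have the same Gram structure, so there is a unitary $U_\ell:E_1(\ell)\to E_2(\ell)$ intertwining the two restriction maps on $\Omega_0$.

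The final step is to upgrade "restrictions agree on $\Omega_0$" to "restrictions agree on all of $\Omega=\Omega_0\cup\Omega_1$". For this I would use the off-diagonal residue information: testing $F_1\equiv F_2$ with $f\in C_0^\infty(\Omega_0)$ but reading the solution $R_{\mathfrak q_j}(-\mu)f$ on $\Omega_1$ (this is exactly what $\Sigma_{\mathfrak q_j}(\mu)$ records), the residue at $-\mu_{(\ell)}$ is the operator $f\mapsto\sum_{k}(f|\varphi_k^{\mathfrak q_j})\varphi_k^{\mathfrak q_j}{_{|\Omega_1}}$, i.e. $\iota_{\Omega_1}\circ P_j^\ell$ where $\iota_{\Omega_1}$ is restriction to $\Omega_1$. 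Composing with $U_\ell$ from the previous step one concludes $\varphi_k^{\mathfrak q_1}{_{|\Omega_1}}$ and $(U_\ell$-image$)_{|\Omega_1}$ coincide, and together with the $\Omega_0$ statement this gives agreement on $\Omega$. Defining $\hat\varphi_k^{\mathfrak q_2}:=U_\ell\varphi_k^{\mathfrak q_1}$ on each eigenspace produces the desired orthonormal basis satisfying \eqref{co1}.

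The main obstacle I anticipate is the bookkeeping around degenerate eigenvalues: the individual eigenfunctions $\varphi_k^{\mathfrak q_j}$ are only defined up to a unitary rotation within each eigenspace, so no pointwise identity $\varphi_k^{\mathfrak q_1}=\varphi_k^{\mathfrak q_2}$ can hold on the nose — one genuinely must work at the level of spectral projections and produce the intertwining unitary $U_\ell$, which is why the statement of Proposition \ref{propositionco} asserts existence of \emph{some} orthonormal basis rather than equality of a given one. A secondary technical point is justifying the termwise identification of residues from the equality of the meromorphic functions $F_j$; this requires knowing that the series \eqref{i3} converges well enough (locally uniformly in $\mu$ away from the poles) to legitimately read off Laurent coefficients, which follows from the resolvent estimate \eqref{r1} and the Weyl-type growth $\mu_k^{\mathfrak q}=O(k^{2s/n})$ established in Section 2.
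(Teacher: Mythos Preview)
Your meromorphic–continuation/residue strategy is a legitimate alternative to the paper's route (Laplace transform followed by uniqueness of Dirichlet series and an appeal to \cite[Lemma 2.3]{CK}); the two are essentially equivalent ways of isolating the spectral projections from the family $\mu\mapsto R_{\mathfrak q_j}(-\mu)$. However, your argument has a genuine gap at the linear–algebra step, and a minor error just before it.

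The minor point first: for a self–adjoint operator the poles of the resolvent are always \emph{simple}, regardless of eigenvalue multiplicity. The multiplicity of $\mu_{(\ell)}$ is encoded in the \emph{rank} of the residue operator $P_j^\ell$, not in the order of the pole, so ``matching multiplicities via residues of higher order'' is not the right mechanism.

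The substantive gap is that you never use the unique continuation property (Theorem~\ref{theorem3.1}), and without it the argument breaks in three places. First, to conclude that every $-\mu_k^{\mathfrak q_j}$ is actually a pole of $F_j$ for some choice of $f,h$, you need that no nonzero eigenfunction vanishes identically on $\Omega_0\cap\Omega_1$; otherwise the corresponding residue could be zero for all test functions and the eigenvalue would be invisible. Second, equality of the kernels $\sum_\ell \varphi_{k,\ell}^{\mathfrak q_1}\otimes\varphi_{k,\ell}^{\mathfrak q_1}=\sum_\ell \varphi_{k,\ell}^{\mathfrak q_2}\otimes\varphi_{k,\ell}^{\mathfrak q_2}$ on $\Omega_0\times\Omega_1$ does \emph{not} by itself force $m_k^1=m_k^2$: if some restricted eigenfunctions were linearly dependent, the tensor rank could drop and different multiplicities could produce the same kernel. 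Third, even granting equal dimensions, the unitary $U_\ell$ you want is a map between the eigenspaces in $L^2(\mathcal M)$, not between their images in $L^2(\Omega_0)$; lifting it back requires the restriction maps $E_j(\ell)\to L^2(\Omega_0)$ to be injective. All three points reduce to: a nonzero element of an eigenspace cannot vanish on an open set, which is precisely Theorem~\ref{theorem3.1} applied to $u\in\mathcal H^s$ with $A^s u=(\mu_{(\ell)}-\mathfrak q_j)u$. The paper makes this dependence explicit in its final sentence, noting that the hypotheses of \cite[Lemma 2.3]{CK} are verified through Theorem~\ref{theorem3.1}; your write–up needs the same ingredient inserted at the step where you pass from ``same reproducing kernel'' to ``same Gram structure, hence a unitary''.

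A smaller bookkeeping issue: the hypothesis gives you $(R_{\mathfrak q_j}(-\mu)f\,|\,h)$ only for $f\in C_0^\infty(\Omega_0)$ and $h\in C_0^\infty(\Omega_1)$, so the kernel identity you obtain lives on $\Omega_0\times\Omega_1$, not $\Omega_0\times\Omega_0$. The assumption $\Omega_0\cap\Omega_1\neq\emptyset$ lets you restrict both variables to the intersection to get a symmetric piece, but you should say this rather than asserting the identity on $\Omega_0\times\Omega_0$.
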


\begin{proof} 
Noting that
\[
\int_0^{+\infty}e^{-\mu t}e^{-\mu_k^{\mathfrak{q}_j}t}dt=\left(\mu+\mu_k^{\mathfrak{q}_j}\right)^{-1},\quad k\ge 1,\; j=1,2,\; \Re\mu >0,
\]
we obtain that $\mu \in \{z\in \mathbb{C};\; \Re z>0\} \mapsto R_{\mathfrak{q}_j}(-\mu)f{_{|\Omega_1}}$ is the Laplace transform of the mapping
\begin{equation}\label{r2}
T_{\mathfrak{q}_j}(t)f=\sum_{k\ge 1}e^{-\mu_k^{\mathfrak{q}_j}t}\left(f|\varphi_k^{\mathfrak{q}_j}\right)\varphi_k^{\mathfrak{q}_j}{_{|\Omega_1}},\quad j=1,2,\;  f\in C_0^\infty(\Omega_0),\; t>0.
\end{equation}
Therefore we have
\begin{equation}\label{r1.1}
T_{\mathfrak{q}_1}(t)f= T_{\mathfrak{q}_2}(t)f,\quad t>0,\; f\in C_0^\infty(\Omega_0).
\end{equation}
 
Observe that the mapping
\[
f\in L^2(\Omega)\mapsto S_{\mathfrak{q}_j}(t)f=\sum_{k\ge 1}e^{-\mu_k^{\mathfrak{q}_j}t}\left(f|\varphi_k^{\mathfrak{q}_j}\right)\varphi_k^{\mathfrak{q}_j},\quad  t\ge 0,
\]
is nothing but the semi-group generated by $-A_{\mathfrak{q}_j}$, $j=1,2$.

Let $\left(\tilde{\mu}_k^{\mathfrak{q}_j}\right)$ be the sequence of distinct eigenvalues of $A_{\mathfrak{q}_j}$ and $\left(\tilde{\varphi}_{k,1}^{\mathfrak{q}_j},\ldots \varphi_{k,m_k^j}^{\mathfrak{q}_j}\right)$ be an orthonormal basis of the eigenspace associated to $\tilde{\mu}_k^{\mathfrak{q}_j}$, $k\ge 1$ and $j=1,2$.

In light of these new definitions we rewrite \eqref{r2} in the form
\[
T_{\mathfrak{q}_j}(t)f=\sum_{k\ge 1}e^{-\tilde{\mu}_k^{\mathfrak{q}_j}t}\sum_{\ell=1}^{m_k^j}\left(f|\tilde{\varphi}_{k,\ell}^{\mathfrak{q}_j}\right)\tilde{\varphi}_{k,\ell}^{\mathfrak{q}_j}{_{|\Omega_1}},\quad j=1,2,\;  f\in C_0^\infty(\Omega_0),\; t>0.
\]
The uniqueness of Dirichlet series yields $\tilde{\mu}_k^{\mathfrak{q}_1}=\tilde{\mu}_k^{\mathfrak{q}_2}$, $k\ge 1$, and
\[
\sum_{\ell=1}^{m_k^1}\left(f|\tilde{\varphi}_{k,\ell}^{\mathfrak{q}_1}\right)\tilde{\varphi}_{k,\ell}^{\mathfrak{q}_1}{_{|\Omega_1}}=\sum_{\ell=1}^{m_k^2}\left(f|\tilde{\varphi}_{k,\ell}^{\mathfrak{q}_2}\right)\tilde{\varphi}_{k,\ell}^{\mathfrak{q}_2}{_{|\Omega_1}},\quad k\ge 1,\;  f\in C_0^\infty(\Omega_0).
\]
We derive in a straightforward  manner from these identities 
\[
\sum_{\ell=1}^{m_k^1}\tilde{\varphi}_{k,\ell}^{\mathfrak{q}_1}{_{|\Omega_0}}\otimes\tilde{\varphi}_{k,\ell}^{\mathfrak{q}_1}{_{|\Omega_1}}=\sum_{\ell=1}^{m_k^2}\tilde{\varphi}_{k,\ell}^{\mathfrak{q}_2}{_{|\Omega_0}}\otimes \tilde{\varphi}_{k,\ell}^{\mathfrak{q}_2}{_{|\Omega_1}},\quad k\ge 1.
\]

Upon substituting $\left(\tilde{\varphi}_{k,\ell}^{\mathfrak{q}_2}\right)_{1\le \ell \le m_k^2}$ by another orthonormal basis of eigenfunctions we infer from \cite[Lemma 2.3]{CK} that  $m_k^1=m_k^2=m_k$, $k\ge 1$, and
\[
\tilde{\varphi}_{k,\ell}^{\mathfrak{q}_1}{_{|\Omega}}=\tilde{\varphi}_{k,\ell}^{\mathfrak{q}_2}{_{|\Omega}},\quad 1\le \ell \le m_k,\; k\ge 1.
\]
In other words we proved that \eqref{co1} holds.

It is worth noticing that conditions (i) and (iii) of \cite[Lemma 2.3]{CK} hold in our case as a consequence of the uniqueness of continuation property of Theorem \ref{theorem3.1}.
\end{proof}

\noindent
\textbf{Acknowledgement.} I would like to thank the referee for his valuable
comments.

\vskip .5cm
\end{document}